 \newtheoremstyle{mytheorem}
 {3pt}
 {3pt}
 {\slshape}
 {}
 {\bfseries}
 {.}
 { }
 {}
\numberwithin{equation}{section}
\theoremstyle{theorem}
\newtheorem{theorem}{Theorem}[section]
\newtheorem{lemma}[theorem]{Lemma}
\theoremstyle{definition}
\newtheorem{remark}{Remark}[section]
\newcommand{\Keywords}[1]{\ifthenelse{\isempty{#1}}{}{\smallskip \smallskip \noindent \textbf{Keywords}. #1}}
\newcommand{\MSC}[2][2010]{\ifthenelse{\isempty{#2}}{}{\smallskip \smallskip \noindent \textbf{#1MSC}. #2}}
\newcommand{\abstractnote}[1]{\ifthenelse{\isempty{#1}}{}{\smallskip \smallskip \noindent \textsuperscript{\dag}#1}}
\def\specialsection{\@startsection{section}{1}%
  \z@{\linespacing\@plus\linespacing}{.5\linespacing}%
  {\normalfont}}
\def\section{\@startsection{section}{1}%
  \z@{.7\linespacing\@plus\linespacing}{.5\linespacing}%
  {\normalfont\scshape}}
\patchcmd{\@settitle}{\uppercasenonmath\@title}{\Large\boldmath}{}{}
\patchcmd{\@settitle}{\begin{center}}{\begin{flushleft}}{}{}
\patchcmd{\@settitle}{\end{center}}{\end{flushleft}}{}{}
\patchcmd{\@setauthors}{\MakeUppercase}{\normalsize}{}{}
\patchcmd{\@setauthors}{\centering}{\raggedright}{}{}
\patchcmd{\section}{\scshape}{\large\bfseries\boldmath}{}{}
\patchcmd{\subsection}{\bfseries}{\bfseries\boldmath}{}{}
\renewcommand{\@secnumfont}{\bfseries}
\patchcmd{\@startsection}{\@afterindenttrue}{\@afterindentfalse}{}{}
\patchcmd{\abstract}{\leftmargin3pc}{\leftmargin1pc}{}{}
\def\maketitle{\par
  \@topnum\z@ 
  \@setcopyright
  \thispagestyle{empty}
  \ifx\@empty\shortauthors \let\shortauthors\shorttitle
  \else \andify\shortauthors
  \fi
  \@maketitle@hook
  \begingroup
  \@maketitle
  \toks@\@xp{\shortauthors}\@temptokena\@xp{\shorttitle}%
  \toks4{\def\\{ \ignorespaces}}
  \edef\@tempa{%
    \@nx\markboth{\the\toks4
      \@nx\MakeUppercase{\the\toks@}}{\the\@temptokena}}%
  \@tempa
  \endgroup
  \c@footnote\z@
  \@cleartopmattertags
}
\newcommand{\qbinom}[2]{\begin{bmatrix}#1\\#2\end{bmatrix}}
\newcommand{\fl}[1]{\left\lfloor#1\right\rfloor}
\title[Truncated Gauss' square exponent theorem]{Note on the truncated generalizations of Gauss' square exponent theorem}
\author[S. Chern]{Shane Chern}
\address{Department of Mathematics, The Pennsylvania State University, University Park, PA 16802, USA}
\email{shanechern@psu.edu}
\date{}
\begin{document}

%

\maketitle

\begin{abstract}
In this note, we investigate J.-C. Liu's work on truncated Gauss' square exponent theorem and obtain more truncations. We also discuss some possible multiple summation extensions of Liu's results.

\Keywords{Gauss' square exponent theorem, truncated identities, multiple summations, $q$-binomial coefficients.}

\MSC{11B65, 33D15.}
\end{abstract}

\section{Introduction}

One major topic of $q$-series deals with various $q$-identities, most of which can be treated as the $q$-analog of combinatorial identities. Some celebrated examples include Euler's pentagonal number theorem \cite[Corollary 1.7]{And1976}
\begin{equation}\label{eq:eu}
\prod_{n\ge 1}(1-q^n)=\sum_{k=-\infty}^\infty (-1)^k q^{k(3k+1)/2}
\end{equation}
and Gauss' square exponent theorem \cite[Corollary 2.10]{And1976}
\begin{equation}\label{eq:ga}
\prod_{n\ge 1}\frac{1-q^n}{1+q^n}=\sum_{k=-\infty}^\infty (-1)^k q^{k^2}.
\end{equation}

Interestingly, some $q$-identities involving infinite sums and/or products also have the corresponding truncated version. Before presenting such truncations, we introduce some standard $q$-series notation:
\begin{align*}
(a;q)_n:=&\prod_{k= 0}^{n-1} (1-aq^k),\\
(a;q)_\infty:=&\prod_{k\ge 0} (1-aq^k).
\end{align*}
We also adopt the $q$-binomial coefficient
$$\qbinom{n}{m}=\qbinom{n}{m}_q:=\begin{cases}
\frac{(q;q)_n}{(q;q)_m (q;q)_{n-m}} & \text{if $0\le m\le n$},\\
0 & \text{otherwise}.
\end{cases}$$

In \cite{BG2002}, Berkovich and Garvan combinatorially proved the following finite $q$-identity
\begin{equation}\label{eq:BG}
\sum_{k=-L}^L (-1)^k q^{k(3k+1)/2} \qbinom{2L-k}{L+k} =1.
\end{equation}
If we let $L\to \infty$, then \eqref{eq:BG} becomes \eqref{eq:eu}. In fact, \eqref{eq:BG} is a direct consequence of
\begin{equation}\label{eq:BG-1}
\sum_{r=0}^{\fl{n/2}}(-1)^r q^{\binom{r}{2}} \qbinom{n-r}{r}=\begin{cases}
(-1)^{\fl{n/3}} q^{n(n-1)/6} & \text{if $n\not\equiv 2 \pmod{3}$},\\
0 & \text{if $n\equiv 2 \pmod{3}$},
\end{cases}
\end{equation}
which first appears in \cite{EZ1996}. To see this, we only need to replace $n$ by $3L$, $r$ by $L+k$ and $q$ by $1/q$ in \eqref{eq:BG-1}. On the other hand, Warnaar \cite{War2004} observed that if one  replaces $n$ by $3L+1$, $r$ by $L+k$ and $q$ by $1/q$ in \eqref{eq:BG-1}, another truncated  generalization of Euler's pentagonal number theorem can be derived
\begin{equation}\label{eq:Wa}
\sum_{k=-L}^L (-1)^k q^{k(3k-1)/2} \qbinom{2L-k+1}{L+k} =1.
\end{equation}
Recently, Liu \cite{Liu2017a} obtained more truncated versions of \eqref{eq:eu} with a surprisingly elementary proof.

The truncations of Gauss' square exponent theorem \eqref{eq:ga}, however, mainly come from a different direction. For example, in \cite{GZ2013}, Guo and Zeng showed that for $L\ge 1$
\begin{equation}\label{eq:GZ}
\frac{(-q;q)_\infty}{(q;q)_\infty}\sum_{k=-L}^L (-1)^k q^{k^2} = 1+ (-1)^L \sum_{n=L+1}^\infty \frac{q^{(L+1)n}(-q;q)_L (-1;q)_{n-L}}{(q;q)_n}\qbinom{n-1}{L}.
\end{equation}
The origin of this type of truncations comes from Andrews and Merca's work \cite{AM2012} on Euler's pentagonal number theorem. For other similar truncated theta series, the interested readers may refer to \cite{AM2018,CHM2016,HJZ2016,Kol2015,Mao2015,Yee2015}. Nonetheless, one should admit that \eqref{eq:GZ} is complicated especially comparing with \eqref{eq:BG} and \eqref{eq:Wa}. Hence we would expect truncated  generalization of Gauss' square exponent theorem as neat as \eqref{eq:BG} and \eqref{eq:Wa}. In \cite{Liu2017b}, Liu provided such truncations
\begin{align}
\sum_{k=-L}^L (-1)^k q^{k^2} (-q;q)_{L-k}\qbinom{3L-k+1}{L+k}&=1,\label{eq:Liu1}\\
\sum_{k=-L}^L (-1)^k q^{k^2} (-q;q)_{L-k}\qbinom{3L-k}{L+k}\frac{1-q^{2L}}{1-q^{3L-k}}&=1,\label{eq:Liu2}\\
\sum_{k=-L}^L (-1)^k q^{k^2} (-q;q)_{L-k}\qbinom{3L-k-1}{L+k-1}&=1.\label{eq:Liu3}
\end{align}
All the three identities are respectively direct consequences of identities analogous to \eqref{eq:BG-1}
\begin{align}
\sum_{r=0}^{n}(-1)^r q^{\binom{r}{2}} (-q;q)_{n-r} \qbinom{2n-r+1}{r}&=\begin{cases}
0 & \text{if $n=2m-1$},\\
(-1)^m q^{m(3m+1)} & \text{if $n=2m$},
\end{cases}\label{eq:Liu1-1}\\
\sum_{r=0}^{n}(-1)^r q^{\binom{r}{2}} (-q;q)_{n-r} \qbinom{2n-r}{r}\frac{1-q^n}{1-q^{2n-r}}&=\begin{cases}
0 & \text{if $n=2m-1$},\\
(-1)^m q^{m(3m-1)} & \text{if $n=2m$},
\end{cases}\\
\sum_{r=0}^{n}(-1)^r q^{\binom{r}{2}} (-q;q)_{n-r} \qbinom{2n-r}{r}&=\begin{cases}
(-1)^{m-1}q^{3m^2-3m+1} & \text{if $n=2m-1$},\\
(-1)^m q^{m(3m-1)} & \text{if $n=2m$}.
\end{cases}\label{eq:Liu3-1}
\end{align}
For example, \eqref{eq:Liu1} is deduced by replacing $n$ by $2L$, $r$ by $L+k$ and $q$ by $1/q$ in \eqref{eq:Liu1-1}. We remark that \eqref{eq:Liu1-1} appears in an early paper of Jouhet \cite[Eq.~(2.7)]{Jou2010}.

We have two purposes in this note. The first purpose is to further investigate Liu's results. We then discuss some possible multiple summation extensions of \eqref{eq:Liu1-1} and \eqref{eq:Liu3-1}, whose idea originates from \cite{GZ2008}.

\section{Further investigation of Liu's results}


We start with the following identity deduced from \eqref{eq:Liu1-1}.

\begin{theorem}
For $n\ge 1$,
\begin{align}
\sum_{r=0}^{n}(-1)^r q^{\binom{r}{2}} (-q;q)_{n-r} &\qbinom{2n-r+1}{r}\frac{1-q^{2n+1}}{1-q^{2n-r+1}}\notag\\
&=\begin{cases}
(-1)^m q^{m(3m-1)} & \text{if $n=2m-1$},\\
(-1)^m q^{m(3m+1)} & \text{if $n=2m$}.
\end{cases}\label{eq:th1-1}
\end{align}
\end{theorem}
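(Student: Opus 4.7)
The plan is to reduce the left-hand side of \eqref{eq:th1-1} to a linear combination of two instances of Liu's identity \eqref{eq:Liu1-1}, evaluated at $n$ and at $n-1$. Write
$$F(N):=\sum_{r=0}^{N}(-1)^r q^{\binom{r}{2}} (-q;q)_{N-r}\qbinom{2N-r+1}{r},$$
so that \eqref{eq:Liu1-1} says $F(2m-1)=0$ and $F(2m)=(-1)^m q^{m(3m+1)}$; let $G(n)$ denote the sum on the left of \eqref{eq:th1-1}. My goal is to prove
$$G(n)=F(n)-q^{2n}F(n-1),$$
after which the two parity cases of \eqref{eq:th1-1} follow by plugging in Liu's values and checking the elementary exponent identity $4m-2+(m-1)(3m-2)=m(3m-1)$.

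The first step is the algebraic splitting
$$\frac{1-q^{2n+1}}{1-q^{2n-r+1}}=1+\frac{q^{2n-r+1}(1-q^r)}{1-q^{2n-r+1}},$$
which follows from $1-q^{2n+1}=(1-q^{2n-r+1})+q^{2n-r+1}(1-q^r)$. Substituting into $G(n)$, the ``$1$'' part produces exactly $F(n)$, so everything reduces to analyzing the remainder sum
$$R(n):=\sum_{r=0}^{n}(-1)^r q^{\binom{r}{2}} (-q;q)_{n-r}\qbinom{2n-r+1}{r}\frac{q^{2n-r+1}(1-q^r)}{1-q^{2n-r+1}}.$$

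For $R(n)$, the next step is to fold the fraction into the $q$-binomial by the standard identity
$$\qbinom{2n-r+1}{r}\frac{1-q^r}{1-q^{2n-r+1}}=\qbinom{2n-r}{r-1},$$
which is immediate from the factorial definition. This kills the $r=0$ term, so I reindex via $s=r-1$, use $\binom{s+1}{2}=\binom{s}{2}+s$ to combine the $q$-powers into $q^{\binom{s}{2}+2n}$, and rewrite $2n-s-1=2(n-1)-s+1$. The resulting sum is $-q^{2n}F(n-1)$, as required.

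The only potentially delicate point is the bookkeeping of the $q$-power shifts in the reindexing, but this is routine and leaves no residual obstacle; the hard combinatorial content has already been absorbed into Liu's identity \eqref{eq:Liu1-1}. Inserting the two evaluations of $F$ then yields $G(2m)=(-1)^m q^{m(3m+1)}$ directly and $G(2m-1)=-q^{4m-2}\cdot(-1)^{m-1}q^{(m-1)(3m-2)}=(-1)^m q^{m(3m-1)}$, which is precisely \eqref{eq:th1-1}.
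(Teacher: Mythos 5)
Your proposal is correct and follows essentially the same route as the paper: the identical splitting $\frac{1-q^{2n+1}}{1-q^{2n-r+1}}=1+\frac{q^{2n-r+1}(1-q^r)}{1-q^{2n-r+1}}$, the same absorption into $\qbinom{2n-r}{r-1}$, and the same reindexing to reach $U_n-q^{2n}U_{n-1}$ before invoking \eqref{eq:Liu1-1}. The exponent bookkeeping and the two parity evaluations all check out.
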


\begin{proof}
Following Liu's notation, we write the left-hand side of \eqref{eq:Liu1-1} as $U_n$. Namely,
$$U_n=\sum_{r=0}^{n}(-1)^r q^{\binom{r}{2}} (-q;q)_{n-r} \qbinom{2n-r+1}{r}.$$
Then
\begin{align*}
&\sum_{r=0}^{n}(-1)^r q^{\binom{r}{2}} (-q;q)_{n-r} \qbinom{2n-r+1}{r}\frac{1-q^{2n+1}}{1-q^{2n-r+1}}\\
&\quad =\sum_{r=0}^{n}(-1)^r q^{\binom{r}{2}} (-q;q)_{n-r} \qbinom{2n-r+1}{r}\left(1+q^{2n-r+1}\frac{1-q^{r}}{1-q^{2n-r+1}}\right)\\
&\quad = U_n - q^{2n} \sum_{r=1}^n (-1)^{r-1}q^{\binom{r-1}{2}}(-q;q)_{n-r}\qbinom{2n-r}{r-1}\\
&\quad = U_n - q^{2n} \sum_{r=0}^{n-1} (-1)^{r}q^{\binom{r}{2}}(-q;q)_{n-r-1}\qbinom{2n-r-1}{r}\\
&\quad = U_n-q^{2n} U_{n-1}.
\end{align*}
The desired result follows from \eqref{eq:Liu1-1}.
\end{proof}

If we replace $n$ by $2L-1$ and $r$ by $L+k$ in \eqref{eq:th1-1}, then
$$\sum_{k=-L}^{L-1} (-1)^{L+k} q^{\binom{L+k}{2}} (-q;q)_{L-k-1}\qbinom{3L-k-1}{L+k}\frac{1-q^{4L-1}}{1-q^{3L-k-1}}=(-1)^L q^{L(3L-1)}.$$
We then replace $q$ by $1/q$ and notice that
\begin{align*}
\qbinom{n}{m}_{q^{-1}}&=q^{m(m-n)}\qbinom{n}{m}_q
\end{align*}
and
\begin{align*}
(-q^{-1};q^{-1})_n&=q^{-\binom{n+1}{2}}(-q;q)_n.
\end{align*}
Hence
\begin{align*}
(-1)^L q^{-L(3L-1)}=\sum_{k=-L}^{L-1} & (-1)^{L+k} q^{-\binom{L+k}{2}} q^{-\binom{L-k}{2}}(-q;q)_{L-k}\\
&\times q^{(L+k)(-2L+2k+1)}\qbinom{3L-k+1}{L+k}\frac{1-q^{-(4L-1)}}{1-q^{-(3L-k-1)}}.
\end{align*}
This leads to a new truncation of Gauss' square exponent theorem
\begin{theorem}
For $L\ge 1$,
\begin{equation}
\sum_{k=-L}^{L-1} (-1)^k q^{k^2} (-q;q)_{L-k-1}\qbinom{3L-k-1}{L+k}\frac{1-q^{4L-1}}{1-q^{3L-k-1}}=1.
\end{equation}
\end{theorem}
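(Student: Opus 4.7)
The plan is to derive this as a direct consequence of the theorem just proved, following exactly the template by which Liu reads off \eqref{eq:Liu1}--\eqref{eq:Liu3} from \eqref{eq:Liu1-1}--\eqref{eq:Liu3-1}. First I would specialize \eqref{eq:th1-1} to $n = 2L-1$, so that by the $m = L$ case the right-hand side evaluates to $(-1)^L q^{L(3L-1)}$, and then perform the re-indexing $r = L + k$, which converts the summation into one over $-L \le k \le L-1$. This is precisely the intermediate identity already displayed in the text immediately preceding the theorem statement.

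Next I would apply the involution $q \mapsto q^{-1}$ to that intermediate identity, using the standard base-change formulas
$$\qbinom{n}{m}_{q^{-1}} = q^{m(m-n)}\qbinom{n}{m}_q,\qquad (-q^{-1};q^{-1})_n = q^{-\binom{n+1}{2}}(-q;q)_n,$$
together with the elementary rewriting
$$\frac{1-q^{-(4L-1)}}{1-q^{-(3L-k-1)}} = q^{-L-k}\cdot\frac{1-q^{4L-1}}{1-q^{3L-k-1}},$$
so that every factor is expressed in terms of positive powers of $q$.

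The only real obstacle is a bookkeeping one: one must collect the resulting exponents of $q$ in the $k$th summand, namely
$$-\binom{L+k}{2} - \binom{L-k}{2} + (L+k)(2k - 2L + 1) - L - k,$$
and verify that this simplifies to $k^2 - L(3L-1)$. Once this computation is carried out, the factor $q^{-L(3L-1)}$ is common to both sides and the sign $(-1)^L$ cancels, delivering the desired identity. I expect no additional difficulty beyond this exponent bookkeeping, since the whole maneuver parallels Liu's passage from \eqref{eq:Liu1-1} to \eqref{eq:Liu1}.
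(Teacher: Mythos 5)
Your proposal is correct and follows exactly the paper's own route: specialize \eqref{eq:th1-1} at $n=2L-1$, re-index via $r=L+k$, and apply $q\mapsto q^{-1}$ with the stated base-change formulas; the exponent bookkeeping indeed yields $k^2-L(3L-1)$, after which the common factor $(-1)^Lq^{-L(3L-1)}$ cancels. Nothing further is needed.
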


We next observe that
\begin{align}
\qbinom{2n-r+1}{r}\frac{1-q^{2n+1}}{1-q^{2n-r+1}}&=\qbinom{2n-r}{r-1} \frac{1-q^{2n+1}}{1-q^r}\notag\\
&=\qbinom{2n-r}{r-1} \left(1+\frac{q^r(1-q^{2n-r+1})}{1-q^r}\right)\notag\\
&=\qbinom{2n-r}{r-1}+q^r \qbinom{2n-r+1}{r}.\label{eq:pas-var}
\end{align}

On the one hand, we have

\begin{theorem}
For $n\ge 1$,
\begin{align}
\sum_{r=0}^{n}(-1)^r q^{\binom{r-1}{2}} (-q;q)_{n-r} &\qbinom{2n-r+1}{r}\frac{1-q^{2n+1}}{1-q^{2n-r+1}}\notag\\
&=\begin{cases}
(-1)^m q^{(m-1)(3m-2)} & \text{if $n=2m-1$},\\
(-1)^m q^{m(3m+1)+1} & \text{if $n=2m$}.
\end{cases}\label{eq:th1-1-1}
\end{align}
\end{theorem}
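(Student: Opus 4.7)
The plan is to apply the binomial decomposition \eqref{eq:pas-var} to the left-hand side of \eqref{eq:th1-1-1}, mirroring the strategy used for \eqref{eq:th1-1}, but now with the two resulting pieces playing interchanged roles because the $q$-exponent is $\binom{r-1}{2}$ rather than $\binom{r}{2}$. Writing $S$ for this left-hand side, substitution of \eqref{eq:pas-var} gives $S = S_1 + S_2$ with
\begin{align*}
S_1 &= \sum_{r=0}^{n}(-1)^r q^{\binom{r-1}{2}} (-q;q)_{n-r} \qbinom{2n-r}{r-1},\\
S_2 &= \sum_{r=0}^{n}(-1)^r q^{\binom{r-1}{2}+r} (-q;q)_{n-r} \qbinom{2n-r+1}{r}.
\end{align*}

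The elementary observation driving both evaluations is that $\binom{r-1}{2}+r = \binom{r}{2}+1$. Applied to $S_2$, this lets me pull out a factor of $q$ and recognize the sum as $q\,U_n$ in Liu's notation, which is evaluated by \eqref{eq:Liu1-1}. For $S_1$, the $r=0$ term vanishes since $\qbinom{2n}{-1}=0$; after shifting $r \mapsto r+1$ in the remaining sum the exponent $\binom{r-1}{2}$ becomes $\binom{r}{2}$, and the result is precisely $-U_{n-1}$, again evaluable by \eqref{eq:Liu1-1}.

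A routine parity case split then finishes the argument. When $n = 2m-1$, $U_n = 0$ while $U_{n-1}=U_{2(m-1)}=(-1)^{m-1}q^{(m-1)(3m-2)}$, so $S = -U_{n-1} = (-1)^m q^{(m-1)(3m-2)}$; when $n = 2m$, $U_{n-1}=0$ while $qU_n = (-1)^m q^{m(3m+1)+1}$, giving the second branch of \eqref{eq:th1-1-1}. The only mildly delicate point is keeping straight how the parities of $n$ and $n-1$ swap which of $U_n$, $U_{n-1}$ vanishes, but this is immediate from the case division in \eqref{eq:Liu1-1}, and no identity beyond \eqref{eq:pas-var} and \eqref{eq:Liu1-1} is needed.
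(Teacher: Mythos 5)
Your proposal is correct and follows essentially the same route as the paper: apply the decomposition \eqref{eq:pas-var}, use $\binom{r-1}{2}+r=\binom{r}{2}+1$ to identify one piece as $qU_n$ and shift the index in the other to get $-U_{n-1}$, then evaluate via \eqref{eq:Liu1-1}. The parity bookkeeping in your final paragraph matches the paper's conclusion exactly.
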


\begin{proof}
It follows from \eqref{eq:pas-var} that
\begin{align*}
&\sum_{r=0}^{n}(-1)^r q^{\binom{r-1}{2}} (-q;q)_{n-r} \qbinom{2n-r+1}{r}\frac{1-q^{2n+1}}{1-q^{2n-r+1}}\\
&\quad = \sum_{r=0}^{n}(-1)^r q^{\binom{r-1}{2}} (-q;q)_{n-r} \left(\qbinom{2n-r}{r-1}+q^r \qbinom{2n-r+1}{r}\right)\\
&\quad = -\sum_{r=1}^{n}(-1)^{r-1} q^{\binom{r-1}{2}} (-q;q)_{n-r}\qbinom{2n-r}{r-1}+q U_n\\
&\quad = -\sum_{r=0}^{n-1}(-1)^r q^{\binom{r}{2}} (-q;q)_{n-r-1}\qbinom{2n-r-1}{r}+q U_n\\
&\quad = -U_{n-1}+q U_n.
\end{align*}
The desired result follows from \eqref{eq:Liu1-1}.
\end{proof}

If we replace $n$ by $2L$, $r$ by $L+k$ and $q$ by $1/q$ in \eqref{eq:th1-1-1}, we obtain another new truncation of Gauss' square exponent theorem

\begin{theorem}
For $L\ge 1$,
\begin{equation}
\sum_{k=-L}^L (-1)^k q^{k^2} (-q;q)_{L-k}\qbinom{3L-k+1}{L+k}\frac{1-q^{4L+1}}{1-q^{3L-k+1}}=1.
\end{equation}
\end{theorem}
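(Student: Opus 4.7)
The plan is to derive this identity from Theorem~2.3 (equation \eqref{eq:th1-1-1}) using exactly the transformation that took Theorem~2.1 to Theorem~2.2 earlier in the section: specialize $n$ to $2L$, reindex by $r = L+k$, and then invert $q$.

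First, I set $n = 2L$ in \eqref{eq:th1-1-1}. Since $2L$ falls in the even branch ($n = 2m$ with $m = L$), the right-hand side evaluates to $(-1)^{L} q^{L(3L+1)+1}$. Reindexing with $r = L+k$ (so $k$ ranges from $-L$ to $L$) turns the summand into
\[
(-1)^{L+k}\, q^{\binom{L+k-1}{2}}\, (-q;q)_{L-k}\, \qbinom{3L-k+1}{L+k}\, \frac{1-q^{4L+1}}{1-q^{3L-k+1}}.
\]

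Second, I replace $q$ by $q^{-1}$ everywhere and invoke the three standard conversions
\[
\qbinom{n}{m}_{q^{-1}} = q^{m(m-n)} \qbinom{n}{m}_{q}, \quad (-q^{-1};q^{-1})_n = q^{-\binom{n+1}{2}}(-q;q)_n, \quad \frac{1-q^{-A}}{1-q^{-B}} = q^{B-A}\, \frac{1-q^{A}}{1-q^{B}}.
\]
Applied to the three $q$-dependent factors in the summand, these produce compensating powers $q^{-(L+k)(2L-2k+1)}$, $q^{-\binom{L-k+1}{2}}$, and $q^{-L-k}$, while the right-hand side becomes $(-1)^{L} q^{-L(3L+1)-1}$.

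Finally, dividing by this right-hand side and absorbing the sign $(-1)^L$ into $(-1)^{L+k}$, the identity reduces to the claim provided that the total $q$-exponent in the summand simplifies to $k^2$. The only arithmetic step is to verify
\[
L(3L+1) + 1 - \binom{L+k-1}{2} - \binom{L-k+1}{2} - (L+k)(2L-2k+1) - (L+k) = k^{2},
\]
which is a routine cancellation presenting no real obstacle. This is essentially the entire proof: the creative content lives in Theorem~2.3, and what remains here is purely a change-of-variables bookkeeping exercise mirroring the derivation of Theorem~2.2.
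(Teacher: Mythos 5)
Your proposal is correct and matches the paper's own derivation exactly: the paper obtains this theorem precisely by substituting $n=2L$, $r=L+k$, and $q\mapsto 1/q$ into \eqref{eq:th1-1-1}, using the same conversion rules for the $q$-binomial coefficient and the $q$-Pochhammer symbol. Your exponent bookkeeping checks out (the total exponent does reduce to $k^2$), so nothing is missing.
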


We also obtain from \eqref{eq:pas-var}

\begin{theorem}
For $n\ge 1$,
\begin{equation}\label{eq:th-bi}
\sum_{r=0}^{n}(-1)^r q^{\binom{r+1}{2}} (-q;q)_{n-r} \qbinom{2n-r+1}{r}=\sum_{m=-\fl{n/2}}^{\fl{(n+1)/2}}(-1)^m q^{m(3m-1)}.
\end{equation}
\end{theorem}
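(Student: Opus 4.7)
The plan is to establish the recurrence $V_n = V_{n-1} + W_n$, where $V_n$ denotes the LHS of \eqref{eq:th-bi} and $W_n$ is the RHS of \eqref{eq:th1-1} (namely $(-1)^m q^{m(3m-1)}$ when $n=2m-1$ and $(-1)^m q^{m(3m+1)}$ when $n=2m$), and then to verify that the RHS of \eqref{eq:th-bi} satisfies the same recurrence with the same initial value.

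The derivation of the recurrence mirrors the proofs of the preceding theorems in this section: I would split $q^{\binom{r+1}{2}} = q^{\binom{r}{2}} q^r$ in the sum, and then invoke \eqref{eq:pas-var} to replace $q^r \qbinom{2n-r+1}{r}$ by $\qbinom{2n-r+1}{r}\frac{1-q^{2n+1}}{1-q^{2n-r+1}} - \qbinom{2n-r}{r-1}$. This writes $V_n$ as the LHS of \eqref{eq:th1-1} minus a second sum; by Theorem~2.1 the first piece is $W_n$, while in the second sum the $r=0$ term vanishes (since $\qbinom{2n}{-1}=0$) and the shift $s = r-1$ identifies it with $-V_{n-1}$. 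Hence $V_n = W_n + V_{n-1}$. For the base case, direct computation gives $V_1 = (1+q) - q(1+q) = 1 - q^2$, matching the RHS of \eqref{eq:th-bi} at $n=1$.

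To close the induction, I would check that $R_n := \sum_{m=-\fl{n/2}}^{\fl{(n+1)/2}} (-1)^m q^{m(3m-1)}$ obeys $R_n - R_{n-1} = W_n$ via a parity split on $n$. When $n=2m$, the lower bound $-\fl{n/2}$ moves from $-(m-1)$ to $-m$ while the upper bound stays at $m$, contributing the new term $(-1)^m q^{m(3m+1)} = W_{2m}$; when $n=2m-1$, the upper bound $\fl{(n+1)/2}$ moves from $m-1$ to $m$ while the lower bound stays at $-(m-1)$, contributing $(-1)^m q^{m(3m-1)} = W_{2m-1}$. In both cases the added boundary term agrees with $W_n$, so $V_n = R_n$ by induction.

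The only real obstacle is the bookkeeping with the floor-function bounds in $R_n$, to make sure the parity case analysis of the added boundary term matches the parity cases of $W_n$; the underlying $q$-series manipulation is otherwise a direct variant of the calculation used for Theorems~2.1 and~2.3.
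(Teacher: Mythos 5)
Your proposal is correct and follows essentially the same route as the paper: both use the identity \eqref{eq:pas-var} to show that the left-hand side of \eqref{eq:th1-1} equals $\tilde{U}_n-\tilde{U}_{n-1}$ (your $V_n-V_{n-1}=W_n$), and then telescope against the known evaluation of \eqref{eq:th1-1}. Your explicit parity check that the right-hand side satisfies the same boundary-term recurrence is a detail the paper leaves implicit, but the argument is identical in substance.
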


\begin{proof}
For convenience, we write
$$\tilde{U}_n = \sum_{r=0}^{n}(-1)^r q^{\binom{r+1}{2}} (-q;q)_{n-r} \qbinom{2n-r+1}{r}.$$

It follows from \eqref{eq:pas-var} that
\begin{align*}
&\sum_{r=0}^{n}(-1)^r q^{\binom{r}{2}} (-q;q)_{n-r} \qbinom{2n-r+1}{r}\frac{1-q^{2n+1}}{1-q^{2n-r+1}}\\
&\quad = \sum_{r=0}^{n}(-1)^r q^{\binom{r}{2}} (-q;q)_{n-r} \left(\qbinom{2n-r}{r-1}+q^r \qbinom{2n-r+1}{r}\right)\\
&\quad = -\sum_{r=1}^{n}(-1)^{r-1} q^{\binom{r}{2}} (-q;q)_{n-r}\qbinom{2n-r}{r-1}+\tilde{U}_n\\
&\quad = -\sum_{r=0}^{n-1}(-1)^r q^{\binom{r+1}{2}} (-q;q)_{n-r-1}\qbinom{2n-r-1}{r}+\tilde{U}_n\\
&\quad = -\tilde{U}_{n-1}+\tilde{U}_n.
\end{align*}
From this telescoping identity along with \eqref{eq:th1-1} and the fact that $\tilde{U}_0=1$, we arrive at the desired result.
\end{proof}

\begin{remark}
Letting $n\to \infty$ in \eqref{eq:th-bi} reduces it to
$$(-q;q)_{\infty}\sum_{r=0}^{\infty}\frac{(-1)^r q^{\binom{r+1}{2}}}{(q;q)_r} =\sum_{m=-\infty}^{\infty}(-1)^m q^{m(3m-1)}.$$
We further deduce from Euler's pentagonal number theorem \eqref{eq:eu} that
$$\sum_{r=0}^{\infty}\frac{(-1)^r q^{\binom{r+1}{2}}}{(q;q)_r}=\frac{(q^2;q^2)_\infty}{(-q;q)_\infty}=(q;q)_\infty.$$
This identity, which is a special case of the $q$-binomial theorem (cf.~\cite[Theorem 2.1]{And1976}), is another pioneer work of $q$-identities due to Euler; see \cite[Corollary 2.2]{And1976}. The interested readers may refer to \cite[Theorem 1.1]{Liu2017a} for the following different truncation of this identity
$$\sum_{r=0}^{\fl{n/2}}(-1)^r q^{\binom{r+1}{2}}\qbinom{n-r}{r}=\sum_{m=-\fl{(n+1)/3}}^{\fl{n/3}}(-1)^m q^{m(3m+1)/2}.$$
\end{remark}

\section{Multiple summations}

In \cite{GZ2008}, Guo and Zeng obtained the multiple summation extensions of \eqref{eq:BG} and \eqref{eq:Wa}
\begin{align}
\sum_{j_1,\ldots,j_m=-L}^{2L}\prod_{k=1}^m (-1)^{j_k}& q^{j_k j_{k+1} +\binom{j_k +1}{2}}  \qbinom{2L-j_k}{L+j_{k+1}}\notag\\
&=\begin{cases}
1 & \text{if $m\not\equiv 0 \pmod{3}$},\\
3L+1 & \text{if $m\equiv 0 \pmod{3}$},
\end{cases}\label{eq:GZ-mul-1}\\
\sum_{j_1,\ldots,j_m=-L}^{2L+1}\prod_{k=1}^m (-1)^{j_k}& q^{j_k j_{k+1} +\binom{j_k}{2}} \qbinom{2L-j_k+1}{L+j_{k+1}}\notag\\
&=\begin{cases}
(-1)^{\fl{m^2/3}} & \text{if $m\not\equiv 0 \pmod{3}$},\\
(-1)^{m/3} (3L+2) & \text{if $m\equiv 0 \pmod{3}$}.
\end{cases}\label{eq:GZ-mul-2}
\end{align}
Here we assume that $j_{m+1}=j_1$. The two multiple summations come from a multiple extension of \eqref{eq:BG-1}. Motivated by their work, we study some possible multiple extensions of \eqref{eq:Liu1-1} and \eqref{eq:Liu3-1}.

Parallel to Liu's notation in \cite{Liu2017b}, for positive integer $m$, we put
\begin{align*}
U_m(n)&=\sum_{r_1,\ldots,r_m=0}^{2n+1}\prod_{k=1}^m (-1)^{r_k} q^{\binom{r_k}{2}} (-q;q)_{n-r_k} \qbinom{2n-r_k+1}{r_{k+1}},\\
W_m(n)&=\sum_{r_1,\ldots,r_m=0}^{2n}\prod_{k=1}^m (-1)^{r_k} q^{\binom{r_k}{2}} (-q;q)_{n-r_k} \qbinom{2n-r_k}{r_{k+1}},
\end{align*}
where again we assume that $r_{m+1}=r_1$. Hence $U_1(n)$ and $W_1(n)$ reduce to Liu's $U_n$ and $W_n$, respectively.

We shall show
\begin{theorem}\label{th:multi}
For $n\ge 1$,
\begin{align}
U_2(n)&=0,\\
U_3(n)&=\begin{cases}
0 & \text{if $n=2k-1$},\\
\frac{(-1)^{k-1}}{2} q^{9k^2+3k} & \text{if $n=2k$},
\end{cases}\\
W_2(n)&=(-1)^n q^{n(3n-1)/2},\\
W_3(n)&=\begin{cases}
\frac{(-1)^k}{2}\left(q^{9k^2-9k+3}-3q^{9k^2-11k+3}\right) & \text{if $n=2k-1$},\\
\frac{(-1)^{k-1}}{2} \left(q^{9k^2-3k}-3q^{9k^2-k}\right) & \text{if $n=2k$}.
\end{cases}
\end{align}
\end{theorem}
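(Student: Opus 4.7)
The plan is to mirror the strategy Guo and Zeng used in \cite{GZ2008} to prove the cyclic multi-sum extensions \eqref{eq:GZ-mul-1} and \eqref{eq:GZ-mul-2}. Observing that the summand of $U_m(n)$ has the cyclic product form $\prod_{k=1}^{m}M_{r_k,r_{k+1}}$ with
$$M_{r,s}:=(-1)^r q^{\binom{r}{2}}(-q;q)_{n-r}\qbinom{2n-r+1}{s},$$
the $m$-fold sum is $\operatorname{tr}(M^m)$, and \eqref{eq:Liu1-1} already evaluates $\operatorname{tr}(M)$. The same structure applies to $W_m(n)$ but with $\qbinom{2n-r}{s}$ in place of $\qbinom{2n-r+1}{s}$, in which case \eqref{eq:Liu3-1} supplies the $m=1$ case. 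The aim is to peel off one cyclic variable at a time until a single-variable evaluation applies.

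For $U_2(n)$, I would fix $r_1$ and evaluate the inner sum in $r_2$. The key step is to decouple the interlocked pair $\qbinom{2n-r_1+1}{r_2}$ and $\qbinom{2n-r_2+1}{r_1}$: one can either apply the Pascal-type splitting \eqref{eq:pas-var} to write $\qbinom{2n-r_2+1}{r_1}$ as $\qbinom{2n-r_2}{r_1-1}+q^{r_1}\qbinom{2n-r_2+1}{r_1}$, or use an index reflection $r_2\mapsto 2n-r_1+1-r_2$ followed by a $q$-Chu--Vandermonde convolution. Either route should bring the inner sum into the shape of \eqref{eq:Liu1-1} or of its companion \eqref{eq:th1-1-1}, after which the outer sum over $r_1$ collapses to zero via a parity cancellation between the two cases of Liu's identity. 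The analogous procedure with \eqref{eq:Liu3-1} in place of \eqref{eq:Liu1-1} handles $W_2(n)$, and the surviving outer contribution is exactly the pentagonal term $(-1)^n q^{n(3n-1)/2}$.

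For $U_3(n)$ and $W_3(n)$, one applies the two-fold reduction just obtained to eliminate $r_3$, leaving a double sum whose structure is a shifted version of $U_2(n)$ or $W_2(n)$; a second pass of the same disentangling reduction should then collapse it to a closed form. The bifurcation into cases $n=2k-1$ versus $n=2k$ reflects the parity split already seen in \eqref{eq:Liu1-1} and \eqref{eq:Liu3-1}, while the two-term expressions in $W_3(n)$ are expected to come from boundary contributions that do not cancel during the telescoping (and from cross terms that appear when the cyclic indices collide, i.e., when a shifted copy of $r_1$ lands on $r_3$).

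The main obstacle is precisely this cyclic coupling: because each $r_k$ occurs simultaneously as the lower index of one $q$-binomial and the upper index of the next, no single summation index can be peeled off without first disentangling the coupling. Choosing the right index reflection or Pascal-type split at each stage, and then correctly matching the resulting single-variable sum against \eqref{eq:Liu1-1}, \eqref{eq:Liu3-1}, or \eqref{eq:th1-1-1}, will require careful bookkeeping of signs and $q$-exponents. This is especially delicate for $W_3(n)$, whose answer already contains two distinct $q$-powers, and this bookkeeping is where the longest routine computation is expected to appear.
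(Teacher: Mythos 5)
Your plan is not the paper's proof, and more importantly it is not yet a proof at all: the crucial step is asserted rather than carried out, and it is precisely the step that fails in the obvious way. In $U_2(n)$ the inner sum over $r_2$ (with $r_1$ fixed) is
\begin{equation*}
\sum_{r_2}(-1)^{r_2}q^{\binom{r_2}{2}}(-q;q)_{n-r_2}\qbinom{2n-r_1+1}{r_2}\qbinom{2n-r_2+1}{r_1},
\end{equation*}
in which $r_2$ occurs as the lower index of one $q$-binomial and inside the upper index of the other. Liu's identity \eqref{eq:Liu1-1} evaluates a sum in which the \emph{same} variable sits in both positions of a \emph{single} $q$-binomial; neither the Pascal-type split \eqref{eq:pas-var} nor the proposed reflection $r_2\mapsto 2n-r_1+1-r_2$ visibly converts the displayed sum into that shape, and you give no computation showing that it does. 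Likewise, the observation that $U_m(n)=\operatorname{tr}(M^m)$ buys nothing by itself: knowing $\operatorname{tr}(M)$ from \eqref{eq:Liu1-1} does not determine $\operatorname{tr}(M^2)$ or $\operatorname{tr}(M^3)$ without spectral information you do not have. The phrases ``should bring the inner sum into the shape of \eqref{eq:Liu1-1}'' and ``is expected to come from boundary contributions'' mark exactly where the argument is missing. Note also that the closed forms for $U_3$ and $W_3$ carry a factor $\tfrac12$ and, for $W_3$, two distinct $q$-powers with coefficient $3$; any telescoping scheme built from the single-sum identities would have to manufacture these, and your sketch gives no mechanism for that.

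The paper takes an entirely different, computer-assisted route: Riese's \texttt{qMultiSum} package produces linear $q$-recurrences in $n$ for each of $U_2$, $U_3$, $W_2$, $W_3$ (Lemma \ref{th:multi-rec}), and the stated closed forms are then checked to satisfy these recurrences together with finitely many initial values. The author explicitly remarks afterwards that traditional $q$-series proofs of Theorem \ref{th:multi} ``are cried out,'' i.e., are still being sought -- so the human-readable decoupling you propose is an open problem rather than a routine adaptation of Guo and Zeng's method. If you want to pursue your approach, the concrete task is to produce an explicit two-variable reduction lemma (an analogue of \eqref{eq:Liu1-1} with the cyclically coupled pair of $q$-binomials) and verify it independently; without that, the argument does not go through.
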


Instead of using the traditional $q$-series approach, we turn to a computer-assisted proof of Theorem \ref{th:multi}. We recall that Riese implemented a powerful \textit{Mathematica} package \texttt{qMultiSum}, whose main function is generating recurrence relations for multiple summation $q$-identities. We refer to \cite{Rie2003} or the following url
\begin{quote}
\url{http://www.risc.jku.at/research/combinat/software/ergosum/RISC/qMultiSum.html}
\end{quote}
for an introduction to this package.

In our cases, the package gives us
\begin{lemma}\label{th:multi-rec}
For $n\ge 1$,
\begin{align}
0&=-q^{6n+8}U_2(n)+U_2(n+2),\label{eq:U2-rec}\\
0&=q^{9n+12}U_3(n)+U_3(n+2),\\
0&=-q^{9n+11}(1+q^{n+3})W_2(n)-q^{6n+10}(1+q^{n+3})W_2(n+1)\notag\\
&\quad +q^{3n+7}(1+q^{n+1})W_2(n+2)+(1+q^{n+1})W_2(n+3),\\
0&=-q^{15n+24} (1 + q^{n+2}) (-1 + q^{n+3}) (1 + q^{n+3})(1 + q^{n+4}) \notag\\
&\quad\quad\quad\times  \Big(-1 - 
     2 q^{n+2} + q^{n+3}\Big)W_3(n)\notag\\
&\quad -  q^{11 n+23} (1 + q^{n+3}) (1 + q^{n+4}) \notag\\
&\quad\quad\quad\times\Big(-1 + q - 
     2 q^{n+2} - 2 q^{n+3} + 2 q^{n+4} - 6 q^{2n+4} + 
     6 q^{2n+5} - q^{2n+6}\notag\\
&\quad\quad\quad\quad\quad - q^{2n+7} + 2 q^{3n+7} + 
     3 q^{3n+8} - 2 q^{3n+9} - q^{3n+10} + 5 q^{4n+9} - 
     2 q^{4n+10} \notag\\
&\quad\quad\quad\quad\quad- 3 q^{4n+11} + 2 q^{4n+12} - 
     2 q^{5n+12} + 2 q^{5n+13}\Big) W_3(n+1)\notag\\
&\quad +  q^{6n+18} (1 + q^{n+1}) (1 + q^{n+4}) (-1 + q^{2n+5})\notag\\
&\quad\quad\quad\times  \Big(1 + 2 q^{n+1} - q^{n+3} + 7 q^{2n+3} - 
     6 q^{2n+4} - 2 q^{2n+5} + q^{2n+6}  \notag\\
&\quad\quad\quad\quad\quad + 2 q^{3n+6} - q^{3n+8} + q^{4n+10}\Big) W_3(n+2)\notag\\
&\quad +  q^{2n+8} (1 + q^{n+1}) (1 + q^{n+2})\notag\\
&\quad\quad\quad\times  \Big(-2 + 2 q + 
     5 q^{n+2} - 2 q^{n+3} - 3 q^{n+4} + 2 q^{n+5} + 
     2 q^{2n+5} + 3 q^{2n+6}\notag\\
&\quad\quad\quad\quad\quad - 2 q^{2n+7} - q^{2n+8} - 
     6 q^{3n+7} + 6 q^{3n+8} - q^{3n+9} - q^{3n+10} - 
     2 q^{4n+10}\notag\\
&\quad\quad\quad\quad\quad - 2 q^{4n+11} + 2 q^{4n+12} - q^{5n+13} +
      q^{5n+14}\Big) W_3(n+3)\notag\\
&\quad + (1 + q^{n+1}) (-1 + q^{n+2}) (1 + q^{n+2}) (1 + q^{n+3}) \notag\\
&\quad\quad\quad\times  \Big(2 - q + q^{n+3}\Big) W_3(n+4).
\end{align}
\end{lemma}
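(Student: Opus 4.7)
The plan is to obtain the four recurrences not by hand $q$-series manipulations, but by applying the $q$-analog of multivariate creative telescoping implemented in Riese's \texttt{qMultiSum} package. For each of $U_2(n), U_3(n), W_2(n), W_3(n)$, the summand is a cyclically-structured product of proper $q$-hypergeometric factors in $r_1,\ldots,r_m$ depending on the parameter $n$, which is exactly the input class the package is designed to handle.

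First I would set up each summand, for instance
\[
F_U(n;r_1,\ldots,r_m) = \prod_{k=1}^{m} (-1)^{r_k} q^{\binom{r_k}{2}} (-q;q)_{n-r_k}\qbinom{2n-r_k+1}{r_{k+1}},
\]
with the cyclic convention $r_{m+1}=r_1$, and analogously $F_W$ with $\qbinom{2n-r_k}{r_{k+1}}$ replacing the last factor. The package then searches for an identity of the form
\[
\sum_{i=0}^{d} a_i(q,q^n)\, F(n+i;r_1,\ldots,r_m) = \sum_{j=1}^{m}\bigl[G_j(n;r_1,\ldots,r_j+1,\ldots,r_m) - G_j(n;r_1,\ldots,r_m)\bigr],
\]
where the $a_i$ are polynomials in $q^n$ (with coefficients in $\mathbb{Q}(q)$) and each $G_j = C_j\cdot F$ for a rational certificate $C_j$ in $q,q^n,q^{r_1},\ldots,q^{r_m}$. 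Summing both sides over all $r_k\ge 0$ collapses the right-hand side (the summands vanish outside the stated range, so the telescoping incurs no boundary contributions), and the left-hand side becomes exactly the linear recurrence claimed in the lemma.

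For each sum I would fix an ansatz order $d$---the target being $d=2$ for $U_2$ and $U_3$, $d=3$ for $W_2$, and $d=4$ for $W_3$, matching what is stated---together with degree bounds for the $a_i$ and support sets for the certificates. The package then reduces the existence of such a telescoping to a linear system over $\mathbb{Q}(q,q^n)$ and returns explicit $a_i$ and $C_j$ when a solution is found. Once the certificates are in hand, the recurrence is reproved independently of the search heuristic: substituting $G_j = C_j F$ into the telescoping equation converts it into a single rational-function identity in the variables $q,q^n,q^{r_1},\ldots,q^{r_m}$, which one verifies by clearing denominators and comparing polynomials mechanically.

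The main obstacle I expect is computational rather than conceptual. The recurrence for $W_3(n)$ is of order four with rather bulky polynomial coefficients in $q^n$, so the underlying linear system is sizable and the intermediate rational expressions swell quickly; one typically has to tune the ansatz---the degree bounds for the $a_i$, the support of each certificate $C_j$, and the elimination order on the variables $q^{r_1},\ldots,q^{r_m}$---to make the search terminate within practical memory. The a~posteriori verification of the certificates, by contrast, is comparatively cheap and is what underwrites the rigor of the lemma.
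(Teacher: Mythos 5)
Your proposal is correct and takes essentially the same route as the paper: the lemma is proved by a computer-assisted application of Riese's \texttt{qMultiSum} package to each summand, with the paper simply exhibiting the explicit \texttt{qFindStructureSet}/\texttt{qFindRecurrence}/\texttt{qSumRecurrence} calls (and noting the $W_3$ computation takes over five hours). Your additional discussion of the underlying creative-telescoping certificates and their a posteriori verification is a sound elaboration of what the package does internally, not a different method.
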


\begin{proof}
We prove \eqref{eq:U2-rec} by calling (with the initialization \texttt{<<RISC`qMultiSum`})
\begin{lstlisting}[language=Mathematica]
stru = qFindStructureSet[qBinomial[2n-r1+1,r2,q] qBinomial[2n-r2+1,r1,q] (-1)^(r1+r2) q^(r1(r1-1)/2+r2(r2-1)/2) qPochhammer[-q,q,n-r1] qPochhammer[-q,q,n-r2], {n}, {r1,r2}, {1}, {1,1}, {1,1}, qProtocol->True]
rec = qFindRecurrence[qBinomial[2n-r1+1,r2,q] qBinomial[2n-r2+1,r1,q] (-1)^(r1+r2) q^(r1(r1-1)/2+r2(r2-1)/2) qPochhammer[-q,q,n-r1] qPochhammer[-q,q,n-r2], {n}, {r1,r2}, {1}, {1,1}, {1,1}, qProtocol->True, StructSet->stru[[1]]]
sumrec = qSumRecurrence[rec]
\end{lstlisting}
For the remaining three recurrence relations, apart from the corresponding summand, we may set other parameters as follows:
\begin{lstlisting}[language=Mathematica]
{n}, {r1,...,rm}, {1}, {1,...,1}, {1,...,1}
\end{lstlisting}
We remark that it costs over five hours to obtain the recurrence relation for $W_3(n)$.
\end{proof}

\begin{proof}[Proof of Theorem \ref{th:multi}]
Theorem \ref{th:multi} is a direct consequence of Lemma \ref{th:multi-rec} and several initial values.
\end{proof}

Of course, traditional $q$-series proofs of identities in Theorem \ref{th:multi} are cried out. We also notice from Theorem \ref{th:multi} that the multiple extensions of Gauss' square exponent theorem are not as neat as Guo and Zeng's multiple extensions of Euler's pentagonal number theorem (cf.~Corollary 2.3 and Theorem 2.4 of \cite{GZ2008}). However, it would be appealing to see if there exist closed forms of $U_m(n)$ and $W_m(n)$ for arbitrary $m$.

\subsection*{Acknowledgements}

I would like to thank George Andrews for helpful discussions.

\bibliographystyle{amsplain}

\end{document}